\newtheorem{theorem}{Theorem}[section]
\newtheorem{lemma}[theorem]{Lemma}
\newtheorem{proposition}[theorem]{Proposition}
\newtheorem{corollary}[theorem]{Corollary}
\theoremstyle{definition}
\newtheorem{remark}[theorem]{Remark}
\newtheorem{definition}[theorem]{Definition}
\newtheorem{example}[theorem]{Example}
\newcommand{\cL}{\mathcal{L}}
\renewcommand{\S}{\mathcal{S}}
\newcommand{\bF}{\mathbb{F}}
\newcommand{\<}{\langle}
\renewcommand{\>}{\rangle}
\DeclareMathOperator{\alg}{Alg}
\DeclareMathOperator{\spn}{span}
\newcommand{\sloc}{S\l oci\'{n}ski}
\title{S\l oci\'nski-Wold decompositions for row-isometries}
\author{Adam H. Fuller}
\address{Department of Mathematics, Ohio University, Athens, OH 45701}
\email{fullera@ohio.edu}
\date{\today}
\subjclass{47A13}
\begin{document}

\begin{abstract}
\sloc\ gave sufficient conditions for commuting isometries to have a nice Wold-like decomposition.
In this note we provide analogous results for row-isometries satisfying certain commutation relations.
Other than known results for doubly-commuting row-isometries, we provide sufficient condtions for a Wold decomposition based on the Lebesgue decomposition of the row-isometries.
\end{abstract}

\maketitle

\section{Introduction}
Let $V$ be an isometry acting on a Hilbert space $H$.
A well-known result, discovered independently by von Neumann (1929) and Wold (1938), tells us that $H$ decomposes uniquely into $V$-reducing subspaces $H=H_u\oplus H_s$ where $V|_{H_u}$ is a unitary and $V_{H_s}$ is a unilateral shift. We will follow the convention of calling this result the \emph{Wold decomposition of $V$}.
Over the decades there have been generalisations of this result, decomposing isometric representations of semigroups into their unitary and non-unitary parts.
Suciu's work in \cite{Suc1968} is an early example of such results. 

The work at hand is largely inspired by the Wold-like decomposition given \sloc\ \cite{Slo1980}.
Let $V_1$ and $V_2$ be commuting isometries on a Hilbert space $H$.
We say $V_1$ and $V_2$ have a \emph{\sloc-Wold} decomposition if $H$ decomposes as $H = H_1 \oplus H_2 \oplus H_3 \oplus H_4$, where each space $H_i$ reduces both $V_1$ and $V_2$;
$V_1|_{H_1},V_1|_{H_2},V_2|_{H_1},V_2|_{H_3}$ are unitaries; and
$V_1|_{H_3},V_1|_{H_4},V_2|_{H_2},V_2|_{H_4}$ are unilateral shifts.
\sloc\ gives sufficient conditions for a pair commuting isometries to have a \sloc-Wold decomposition.
Most notable, or at least the the most noted, of these results is that a pair of \emph{doubly}-commuting isometries $V_1$ and $V_2$ has a \sloc-Wold decomposition (where doubly commuting means that $V_1V_2=V_2V_1$ and $V_1^*V_2 = V_2V_1^*$).
Generalisations of this result for $n$ doubly-commuting isometries have been given \cite{GasSuc1989}.
\sloc\ also gives sufficient conditions for the existence of a \sloc-Wold decomposition based on the structure of the individual unitary parts of the isometries.
Recall that a unitary $U$ can decomposed as $U_{abs} \oplus U_{sing}$ where $U_{abs}$ has absolutely continuous spectral measure and $U_{sing}$ has singular spectral measure (both with respect to Lebesgue measure).
\sloc\ gives two results \cite[Theorem~4 and Theorem~5]{Slo1980} showing the existence of a \sloc-Wold decomposition in the absence of of absolutely continuous unitary parts.

Let $S = [S_1,\ldots,S_m]$ be a row-isometry on a Hilbert space $H$.
That is $S \colon H^{(m)} \rightarrow H$ is an isometric map.
Equivalently, $S = [S_1,\ldots,S_m]$ is a row-isometry if $S_1, \ldots, S_n$ are isometries with pairwise orthogonal ranges.
Popescu \cite{Pop1989} shows there is a Wold-decomposition for $S$.
That is $H$ can be decomposed into $S$-reducing subspaces $H = H_u \oplus H_s$  where $S|_{H_u}$ is a row-unitary, and $S|_{H_s}$ is an $n$-shift.
Beyond row-isometries, Muhly and Solel \cite{MuhSol1999} give a Wold decomposition isometric representations of C$^*$-correspondences, decomposing an isometric representation into unitary and induced parts.

Let $S=[S_1,\ldots,S_m]$ and $T=[T_1,\ldots, T_n]$ be two row-isometries on a Hilbert space $H$.
We say $S$ and $T$ $\theta$-commute if there is a permutation $\theta \in S_{m\times n}$ such that for $1\leq i \leq m$ and $1\leq i \leq n$, $S_iT_j = T_{j'}S_{i'}$ when $\theta(i,j) = (i',j')$.
A pair of $\theta$-commuting row-isometries determines an isometric representation of a $2$-graph with a single vertex.
Thus, a pair of $\theta$-commuting row-isometries is an isometric representation of a product system of two finite-dimensional C$^*$-correspondences, see e.g. \cite[Section~4]{Ful2011}.
Skalski and Zacharias \cite{SkaZac2008} generalised of \sloc's Wold decomposition for doubly-commuting isometries to isometric representations of product systems of C$^*$-correspondences which satisfy a doubly-commuting condition.
Thus, Skalski and Zacharias's result gives a \sloc-Wold decomposition for $\theta$-commuting row-isometries.

In this note we will give sufficient conditions for two $\theta$-commuting row-isometries to have a \sloc-Wold decomposition mirroring the three theorems proved by \sloc\ for commuting isometries.
Theorems 3, 4 and 5 of \cite{Slo1980} are generalized in Theorems \ref{thm: skalski zach}, \ref{thm: singular parts} and \ref{thm: singular parts 2} respectively.
In \cite[Theorem~4 and Theorem~5]{Slo1980}, \sloc\ uses the Lebesgue decomposition of a unitary.
For row-unitaries we use the Lebesgue decomposition due to Kennedy \cite{Ken2013}.
This states that any row-unitary decomposes into an absolutely continuous row-unitary, a singular row-unitary, and a third part called a dilation-type row-unitary.
For a single unitary $U$ the statements `$U$ has no absolutely continuous part' and `$U$ is singular' are equivalent; for row-unitaries the existence of dilation-type parts means that the latter is a stronger statement than the former.
In this note, for a row-unitary, the statement `$U$ is singular' will play the role that `$U$ has no absolutely continuous part' played in \cite{Slo1980}.

\section{Row-isometries and their structure}

A \emph{row-isometry} on a Hilbert space $H$ is an isometric map $S$ from $H^{(n)}$ to $H$.
An operator $S \colon H^{(n)} \rightarrow S$ is a row-isometry if and only if $S = [S_1, \ldots, S_m]$ where $S_1, \ldots, S_m$ are isometries on $H$ with pairwise orthogonal ranges.
Equivalently, the $S_1, \ldots, S_m$ are isometries satisfying
$$ \sum_{i=1}^m S_iS_i^* \leq I_H. $$
A row-isometry $S = [S_1, \ldots, S_m]$ is a \emph{row-unitary} if $S$ is a unitary map.
Equivalently, $S$ is a row-unitary if
$$ \sum_{i=1}^m S_iS_i^* = I_H. $$

Let $S=[S_1, \ldots, S_m]$ be a row-operator on a Hilbert space $H$ and let $M \subseteq H$ be a subspace.
The subspace $M$ is \emph{$S$-invariant} if $S_i H \subseteq H$ for each $1\leq i \leq m$;  $M$ is \emph{$S^*$-invariant} if $S_i^* H \subseteq H$ for each $1\leq i \leq m$; and $M$ is \emph{$S$-reducing} if $M$ is both $S$-invariant and $S^*$-invariant.

Denote by $\bF_m^+$ the unital free-semigroup on $n$ generators $\{1,\ldots, m\}$.
For $w = w_1 \ldots w_k \in \bF_n^+$ denote by $S_w$ the isometry
$$ S_{w_1}S_{w_2}\ldots S_{w_k}. $$
Here $S_\emptyset$ will denote $I_H$.

\begin{example}\label{ex: L}
Let $H= \ell^2(\bF_m^+)$ with orthonormal basis $\{ \xi_w \colon w \in \bF_m^+ \}.$
For $i \in \{1, \ldots, m\}$, define the operator $L_i$ by
$$ L_i \xi_w = \xi_{iw}. $$
Then $L = [L_1, \ldots, L_m]$ is a row-isometry on $H$. 
\end{example}

\begin{definition}
Let $S = [S_1, \ldots, S_m]$ be a row-isometry.
Let $L$ be the row-isometry described in Example~\ref{ex: L},
We call $S$ and \emph{$m$-shift of multiplicity $\alpha$} if $S$ is unitarily equivalent to an ampliation of $L$ by $\alpha$.
That is $[S_1,\ldots,S_m] \simeq [L_1^{(\alpha)},\ldots,L_m^{(\alpha)}]$.
\end{definition}

Note that when $m=1$, an $m$-shift is a unilteral shift.
Thus, the following result, due to Popescu \cite{Pop1989}, is a generalisation of the Wold decomposition of a single isometry.

\begin{theorem}[{cf.~\cite[Theorem~1.2]{Pop1989}}]\label{thm: wold}
Let $S = [S_1, \ldots, S_m]$ be a row-isometry on $H$.
Then $H$ decomposes into two $S$-reducing subspaces
$$H = H_u \oplus H_{s}, $$
such that $S|_{H_u}$ is a row-unitary and $S|_{H_{s}}$ is an $m$-shift.

Further
$$ H_u = \bigcap_{k\geq 0} \bigoplus_{|w| = k} S_w H, $$
and
$$ H_{s} = \bigoplus_{w\in \bF_n^+}S_w M $$
where $M = \bigcap_{i=1}^n \ker(S_i^*)$.
\end{theorem}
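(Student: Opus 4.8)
The plan is to mirror the classical proof of the Wold decomposition for a single isometry, with the free semigroup $\bF_m^+$ playing the role of $\mathbb{N}$. First I would introduce the wandering subspace $M = \bigcap_{i=1}^m \ker(S_i^*)$ and observe that, since the ranges $S_1 H, \ldots, S_m H$ are pairwise orthogonal, $\sum_{i=1}^m S_i S_i^*$ is the orthogonal projection onto $\bigoplus_{i=1}^m S_i H$; hence $M = H \ominus \bigoplus_{i=1}^m S_i H$ and $I - \sum_i S_i S_i^*$ is the projection onto $M$. The two halves of the decomposition will then be $H_s = \bigoplus_{w \in \bF_m^+} S_w M$ and $H_u = H \ominus H_s$, and the bulk of the work is to verify that these subspaces reduce $S$, that $S|_{H_s}$ is an $m$-shift and $S|_{H_u}$ a row-unitary, and that $H_u$ admits the stated intersection formula.

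The first key step is to check that the subspaces $\{S_w M : w \in \bF_m^+\}$ are pairwise orthogonal, so that $H_s$ is a genuine orthogonal sum. For $w \neq w'$ I would split into two cases: if neither word is a prefix of the other, they have the form $w = v a u$, $w' = v b u'$ with $a \neq b$ in $\{1,\ldots,m\}$, and then $S_w M \subseteq S_v S_a H \perp S_v S_b H \supseteq S_{w'} M$ because $S_a H \perp S_b H$ and $S_v$ is isometric; if $w$ is a proper prefix of $w'$, say $w' = w u$ with $u$ nonempty, then for $\xi, \eta \in M$ one has $\langle S_w \xi, S_{w'} \eta \rangle = \langle \xi, S_u \eta \rangle = 0$ since $S_u \eta \in \bigoplus_i S_i H$ while $\xi \in M$. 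With orthogonality in hand, $H_s$ is manifestly $S$-invariant (as $S_i S_w M = S_{iw} M$), and it is $S^*$-invariant because $S_i^* S_w M$ equals $S_{w'} M$ when $w = i w'$, and is $\{0\}$ when $w$ is empty or does not begin with $i$. To see that $S|_{H_s}$ is an $m$-shift of multiplicity $\alpha = \dim M$, I would fix an orthonormal basis of $M$ and send $\xi_w \otimes e \mapsto S_w e$; orthogonality of the $S_w M$ makes this a well-defined surjective isometry onto $H_s$ intertwining each $L_i^{(\alpha)}$ with $S_i|_{H_s}$.

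The crux of the argument is the identification of $H_u$ with $\bigcap_{k \geq 0} \bigoplus_{|w|=k} S_w H$, and I expect the telescoping/convergence bookkeeping here to be the main obstacle. The engine is the single-step decomposition $S_w H = S_w M \oplus \bigoplus_{i=1}^m S_{wi} H$, which summed over all $|w| = k$ gives
\[
\bigoplus_{|w|=k} S_w H = \Bigl(\bigoplus_{|w|=k} S_w M\Bigr) \oplus \bigoplus_{|w|=k+1} S_w H .
\]
Iterating from $k=0$ yields, for every $N$, the finite orthogonal decomposition $H = \bigoplus_{k=0}^{N-1}\bigoplus_{|w|=k} S_w M \;\oplus\; \bigoplus_{|w|=N} S_w H$. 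The projections onto $\bigoplus_{|w|=N} S_w H$ are decreasing in $N$, so letting $N \to \infty$ the first summand increases to $H_s$ and the second decreases to $\bigcap_{N} \bigoplus_{|w|=N} S_w H$; this forces $H_u = H \ominus H_s = \bigcap_{N \geq 0}\bigoplus_{|w|=N} S_w H$, as claimed. Finally, since $H_s$ reduces $S$ so does $H_u$, and to see $S|_{H_u}$ is a row-unitary I would use $H_u \subseteq \bigoplus_i S_i H$ together with $S^*$-invariance: for $x \in H_u$ we have $x = \sum_i S_i(S_i^* x)$ with each $S_i^* x \in H_u$, giving $H_u = \bigoplus_{i=1}^m S_i H_u$ and hence $\sum_i S_i S_i^*|_{H_u} = I_{H_u}$.
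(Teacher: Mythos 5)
The paper does not prove this theorem; it is quoted from Popescu (cf.\ Theorem~1.2 of \cite{Pop1989}), so there is no internal proof to compare against. Your argument is correct and is essentially the standard one (Popescu's generalization of the classical Wold decomposition): the wandering subspace $M=\bigcap_i\ker S_i^*$, orthogonality of the spaces $S_wM$, the telescoping identity $\bigoplus_{|w|=k}S_wH=\bigl(\bigoplus_{|w|=k}S_wM\bigr)\oplus\bigoplus_{|w|=k+1}S_wH$, and the identification of $H_u$ as the decreasing intersection are exactly the right steps, and each is carried out correctly.
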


\begin{definition}
When $S$ is a row-isometry on a Hilbert space $H$, the decomposition $H = H_s \oplus H_u$ described in Theorem~\ref{thm: wold} is called the \emph{Wold decomposition} of $S$.
\end{definition}

\subsection{The Lebesgue-Wold decomposition}
Just as a unitary can be decomposed into its singular and unitary parts, a row-unitary can be decomposed further. 
We will briefly summarise these results now, drawing largely from \cite{DKP2001} and \cite{Ken2013}.

Let $L = [L_1, \ldots, L_m]$ be the $m$-shift described in Example~\ref{ex: L}.
Denote by $A_m$ and $\cL_m$ the following two algebras:
\begin{align*}
    A_m &:= \alg\{I, L_1, \ldots, L_m \overline{\}}^{\|\cdot\|} \\
    \cL_m &:= \alg\{I, L_1, \ldots, L_m \overline{\}}^{\textsc{wot}}
\end{align*}
The algebra $A_m$ is called the \emph{noncommutative disk algebra}; and the algebra $\cL_m$ is called the \emph{noncommutative analytic Toeplitz algebra}.

Let $S = [S_1, \ldots, S_m]$ be a row-isometry on a Hilbert space $H$. 
The \emph{free semigroup algebra} generated by $S$ is the algebra
$$ \S := \alg\{I, S_1, \ldots, S_m \overline{\}}^{\textsc{wot}}. $$
Popescu \cite{Pop1996} observed that the unital, norm-closed algebra generated by $S_1, \ldots, S_m$ is completely isometrically isomorphic to the noncommutative disk algebra $A_m$.
The free semigroup algebra $\S$, however, can be very different from $\cL_m$.

\begin{definition}\label{def: abs cont}
Let $S = [S_1,\ldots,S_m]$ be a row-isometry on a Hilbert space $H$ with $m\geq 2$.
\begin{enumerate}
\item There is a completely isometric isomorphism
$$ \Phi \colon A_m \rightarrow \alg\{I, S_1, \ldots, S_m\overline{\}}^{\|\cdot\|}, $$
such that $\Phi(L_i) = S_i$ for $1\leq i \leq m$.
The row-isometry $S$ is \emph{absolutely continuous} if $\Phi$ extends to a weak-$^*$ continuous representation of $\cL_m$.

\item The row-isometry $S$ is \emph{singular} if $S$ has no absolutely continuous restriction to an invariant subspace.

\item The row-isometry $S$ is of \emph{dilation-type} if it has no singular and no absolutely continuous summands.
\end{enumerate}
\end{definition}

\begin{remark}\label{rem: unitary types}
\begin{enumerate}
\item Absolute continuity for row-isometries was introduced by Davidson, Li and Pitts \cite{DLP2005}.
We refer the reader to \cite[Section~2]{DLP2005} or \cite[Section~2]{Ken2013} for details on why Definition~\ref{def: abs cont} generalizes the notion of a unitary with absolutely continuous spectral measure.

\item By \cite[Theorem~5.1]{Ken2013}, a row-isometry $S = [S_1,\ldots, S_m]$, with $m\geq 2$, is singular if and only if the free semigroup algebra $\S$ generated by $S$ is a von Neumann algebra.
Read \cite{Rea2005} gave the first example of a selfadjoint free semigroup algebra, by showing that $B(H)$ is a free semigroup algebra, see also \cite{Dav2006}.

\item The name `dilation-type' is justified in \cite[Proposition~6.2]{Ken2013}. If $S$ is a row-isomtry of dilation-type on $H$, then there is a minimal subspace $V \subseteq H$ such that $V$ is invariant for each $S_i^*$, $1\leq i \leq m$ and the restriction of $S$ to $V^\perp$ is an $m$-shift.
In which case, $S$ is the minimal isometric dilation of the compression of $S$ to $V$.
\end{enumerate} 
\end{remark}

We can now describe the Lebesgue-Wold decomposition of a row-isometry, due to Kennedy \cite{Ken2013}.

\begin{theorem}[{cf.~\cite[Theorem~6.5]{Ken2013}}]\label{thm: lebesgue wold}
If $S$ is a row-isometry on $H$ then $H$ decomposes into $4$ spaces which reduce $S$.
$$ H = H_{abs} \oplus H_{sing} \oplus H_{dil} \oplus H_{s},$$
where $H_{abs} \oplus H_{sing} \oplus H_{dil}$ and $H_s$ are the unitary and $m$-shift parts of the Wold decomposition respectively.
Further we have the following properties:
\begin{enumerate}
    \item $S|_{H_{abs}}$ is absolutely continuous;
    \item $S|_{H_{sing}}$ is singular;
    \item $S|_{H_{dil}}$ is of dilation-type.
\end{enumerate}
\end{theorem}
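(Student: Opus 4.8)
The plan is to obtain the four-fold decomposition in two stages: first peel off the $m$-shift part using the Wold decomposition, and then split the remaining row-unitary into its absolutely continuous, singular, and dilation-type reducing subspaces. To begin, I would apply Theorem~\ref{thm: wold} to write $H = H_u \oplus H_s$, where $S|_{H_s}$ is an $m$-shift and $U := S|_{H_u}$ is a row-unitary, both summands reducing $S$. All of the remaining work then takes place inside $H_u$, so it suffices to decompose the row-unitary $U$.

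For the absolutely continuous part, I would appeal to the theory of absolutely continuous vectors of Davidson, Li and Pitts \cite{DLP2005}: the collection of absolutely continuous vectors for $U$ forms a closed subspace $H_{abs} \subseteq H_u$ that reduces $U$, and on which $U$ is absolutely continuous in the sense of Definition~\ref{def: abs cont}(1). The two facts that make $H_{abs}$ the \emph{largest} such reducing subspace are that a direct sum of absolutely continuous row-isometries is again absolutely continuous, and that the restriction of an absolutely continuous row-unitary to a reducing subspace remains absolutely continuous; together these show that $U$ restricted to $H_u \ominus H_{abs}$ has no nonzero absolutely continuous summand, for otherwise such a summand could be adjoined to $H_{abs}$.

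It remains to split $H_u \ominus H_{abs}$ into a singular and a dilation-type part. Here I would use Kennedy's characterisation recorded in Remark~\ref{rem: unitary types}(2), that is \cite[Theorem~5.1]{Ken2013}: a row-isometry is singular precisely when its free semigroup algebra $\S$ is a von Neumann algebra. Using this, I would define $H_{sing}$ to be the largest reducing subspace of $H_u \ominus H_{abs}$ on which $U$ is singular, and set $H_{dil} := (H_u \ominus H_{abs}) \ominus H_{sing}$. By construction $H_{dil}$ sits inside $H_u \ominus H_{abs}$, so it carries no absolutely continuous summand, and by maximality of $H_{sing}$ it carries no singular summand; hence $U|_{H_{dil}}$ is of dilation-type by Definition~\ref{def: abs cont}(3). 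Assembling the pieces yields $H = H_{abs} \oplus H_{sing} \oplus H_{dil} \oplus H_s$ with the three asserted properties, and each summand reduces $S$ since it is built from reducing subspaces.

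The main obstacle is the well-definedness of the maximal subspaces $H_{abs}$ and $H_{sing}$ --- in particular, that the classes of absolutely continuous and of singular row-unitaries are each closed under the direct sums and weak-$^*$ closures needed to guarantee a \emph{largest} reducing subspace of each type, and that the resulting pieces are mutually orthogonal (equivalently, that no nonzero row-unitary is simultaneously absolutely continuous and singular, and that a reducing summand of a singular row-isometry is again singular). The singular case is the delicate one, since the free semigroup algebra of a direct sum is not simply the direct sum of the free semigroup algebras, so stability of the von Neumann algebra condition under the relevant operations must be argued carefully; this is exactly where the structure theory of free semigroup algebras from \cite{DKP2001}, \cite{DLP2005} and \cite{Ken2013} carries the argument.
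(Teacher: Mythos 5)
Note first that the paper does not prove this statement: it is quoted verbatim as Kennedy's Lebesgue--Wold decomposition \cite[Theorem~6.5]{Ken2013} and used as background, so there is no in-paper argument to compare yours against. Judged on its own terms, your outline follows the strategy of Kennedy's actual proof --- peel off the $m$-shift part with Popescu's Wold decomposition, then split the remaining row-unitary into absolutely continuous, singular, and residual (dilation-type) reducing pieces --- and you correctly identify where the real work lies.

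That said, as a proof rather than an outline it has a genuine gap, and it is exactly the one you flag at the end. Defining $H_{sing}$ as ``the largest reducing subspace on which $U$ is singular'' requires knowing that such a largest subspace exists, and your proposed justification via direct sums does not close: if $M_1$ and $M_2$ are reducing subspaces containing no nonzero absolutely continuous vectors, it does not follow formally that $\overline{M_1+M_2}$ contains none, so the family of singular reducing subspaces is not obviously closed under joins. The same issue arises, less severely, for $H_{abs}$: that the set of absolutely continuous vectors is \emph{reducing} (not merely invariant, which is what \cite{DLP2005} gives) is one of Kennedy's main theorems, not a fact you can simply import. In \cite{Ken2013} both difficulties are resolved concretely through the structure projection $P$ of Theorem~\ref{thm: structure thm}: $S$ is absolutely continuous on $P^\perp H$, and $H_{sing}$ is realized explicitly as the largest $S$-invariant subspace of $PH$ (the description recorded in Corollary~\ref{cor: H sing}), which is manifestly a single well-defined subspace; maximality and mutual orthogonality then come for free. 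So your plan is the right shape, but the ``maximal subspace'' steps need to be replaced by (or reduced to) the structure-projection argument rather than deferred.
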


Kennedy \cite[Theorem~4.16]{Ken2013} gives another characterisation of absolute continuity.
Let $S=[S_1,\ldots, S_m]$ be a row-isometry with $m\geq 2$, and let $\S$ be the free semigroup algebra generated by $S$.
Then $S$ is absolutely continuous if and only if $\S$ is isomorphic to $\cL_m$.
This characterisation answered a question asked in \cite{DLP2005}.

The property of $\S$ being isomorphic to $\cL_m$ plays an important role in the work of Davidson, Katsoulis and Pitts \cite{DKP2001} in describing the structure of free semigroup algebras.
We summarise the results which will be relevant to us now.
Note that what we are calling `absolutely continuous' was called `\emph{type L}' in \cite{DKP2001}.
The equivalence of the terms is due to the aforementioned work of Kennedy \cite{Ken2013}.

\begin{theorem}[{cf.~\cite[Theorem~2.6]{DKP2001}}]\label{thm: structure thm}
Let $S = [S_1,\ldots,S_m]$ be a row-isometry on a Hilbert space $H$ with $m\geq 2$.
Let $\S$ be the free semigroup algebra generated by $\S$.
Then is a largest projection $P$ in $\S$ such that $P\S P$ is self-adjoint.
Further, the following are satisfied:
\begin{enumerate}
    \item $PH$ is $S^*$-invariant; and
    \item the restriction of $S$ to $P^\perp H$ is an absolutely continuous row-isometry.
\end{enumerate}
\end{theorem}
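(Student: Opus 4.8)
The plan is to obtain $P$ as the supremum of all ``self-adjoint corners'' of $\S$ and then to read off the two listed properties. Throughout I would use that $\S$ is a unital \textsc{wot}-closed operator algebra (though not selfadjoint): it is therefore closed under $X\mapsto I-X$, and for projections $Q_1,Q_2\in\S$ it contains both the meet $Q_1\wedge Q_2$ (the \textsc{sot}-limit of the powers $(Q_1Q_2Q_1)^n$, each of which lies in $\S$) and hence the join $Q_1\vee Q_2=I-(Q_1^\perp\wedge Q_2^\perp)$; it also contains the supremum of any increasing net of its projections, since such a net \textsc{wot}-converges to that supremum. Let $\mathcal F$ be the family of projections $Q\in\S$ for which the corner $Q\S Q$ is self-adjoint. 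To produce a \emph{largest} element of $\mathcal F$ it then suffices to check that self-adjointness of the corner is inherited by joins (making $\mathcal F$ upward directed) and by suprema of increasing nets (keeping $\sup\mathcal F$ inside $\mathcal F$); granting this, $P:=\sup\mathcal F$ is a projection in $\S$, lies in $\mathcal F$, and dominates every member of $\mathcal F$.

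Next I would establish property (1). Since the relation $P^\perp X P=0$ passes to products and \textsc{wot}-limits, $PH$ is $S^*$-invariant as soon as $PS_iP^\perp=0$ for each generator, i.e.\ as soon as $P^\perp H$ is $S$-invariant. I would prove this from maximality: writing out the positive operator $\sum_i PS_iP^\perp S_i^*P$, if it were nonzero then $P$ could be enlarged to a strictly larger projection whose corner is still self-adjoint, contradicting that $P$ is largest; hence each $PS_iP^\perp=0$ and $PH$ is $S^*$-invariant. Consequently $P^\perp H$ is $S$-invariant, so $S|_{P^\perp H}$ is a row-isometry generating its own free semigroup algebra $\S|_{P^\perp H}$, and by the maximality of $P$ this algebra has trivial structure projection --- it carries no nonzero self-adjoint corner.

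The main obstacle is the remaining implication needed for property (2): a free semigroup algebra possessing no nonzero self-adjoint corner must be absolutely continuous. This is the analytic core of the theorem, and I would approach it through the noncommutative Fourier-series and Ces\`{a}ro-mean calculus of $\cL_m$, showing that the absence of a self-adjoint corner forces the canonical completely isometric map $A_m\to\alg\{I,S_1,\dots,S_m\}^{\|\cdot\|}$ of Definition~\ref{def: abs cont} to extend to a weak-$*$ continuous representation of $\cL_m$; that is exactly absolute continuity of $S|_{P^\perp H}$, and by Kennedy's characterisation \cite[Theorem~4.16]{Ken2013} it is equivalent to $\S|_{P^\perp H}\cong\cL_m$. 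I expect the two genuinely hard points to be the stability of corner-self-adjointness under joins in the first paragraph and this ``no self-adjoint corner $\Rightarrow$ type L'' dichotomy; by contrast uniqueness of $P$ is automatic, being built into the word ``largest.''
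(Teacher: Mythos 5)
This theorem is not proved in the paper at all: it is imported verbatim (modulo Kennedy's identification of ``type L'' with ``absolutely continuous'') from Davidson--Katsoulis--Pitts, so there is no in-paper argument to compare against. Measured against the actual proof in \cite{DKP2001}, your proposal takes a genuinely different route, and unfortunately the route as written defers every step that carries real content. First, your construction of $P$ as $\sup\mathcal F$ hinges on the claim that if $Q_1\S Q_1$ and $Q_2\S Q_2$ are self-adjoint then so is $(Q_1\vee Q_2)\S(Q_1\vee Q_2)$; you flag this but give no argument, and I do not see how to prove it directly --- in \cite{DKP2001} the fact that $P$ dominates every such $Q$ is deduced \emph{after} the structure of $P$ is in hand (essentially because the compression of $\S$ to $P^\perp H$ is isomorphic to $\cL_m$, which contains no nontrivial projections), not by showing $\mathcal F$ is a directed set. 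Second, the maximality argument for (1) --- ``if $\sum_i PS_iP^\perp S_i^*P\neq 0$ then $P$ could be enlarged'' --- is asserted without any recipe for producing the larger projection in $\S$ with self-adjoint corner; this is not a routine perturbation. Third, you explicitly leave open the implication ``no nonzero self-adjoint corner $\Rightarrow$ type L,'' which is the analytic heart of the theorem.

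For contrast, the argument in \cite{DKP2001} does not build $P$ as a supremum. It considers the \textsc{wot}-closed ideal $\S_0$ generated by $S_1,\dots,S_m$ and sets $\J=\bigcap_{k\geq 1}\overline{\S_0^k}^{\textsc{wot}}$; the key lemma is that $\J$ is a left ideal of the von Neumann algebra $W$ generated by $S$, whence $\J=WP=\S P$ for a projection $P\in\S$. Property (1) then falls out for free: since $\J$ is a two-sided ideal of $\S$ containing $P$, one has $PS_i\in\J=\S P$, so $PS_iP^\perp=0$, i.e.\ $P^\perp H$ is $S$-invariant and $PH$ is $S^*$-invariant --- no enlargement argument is needed. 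The self-adjointness of $P\S P=PWP$ and the type L behaviour on $P^\perp H$ are then extracted from the Fourier/Ces\`aro calculus applied to the quotient by $\J$. If you want to complete your write-up, the honest path is to switch to this ideal-theoretic construction; as it stands, your text is a plan whose three load-bearing steps are all conceded rather than proved.
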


\begin{definition}
Let $S$ be a row-isometry and let $P$ be the projection described in Theorem~\ref{thm: structure thm}.
Then $P$ is called \emph{structure projection} for $S$.
\end{definition}

Let $S = [S_1, \ldots, S_m]$ be a row-isometry on $H$, with $H = H_{abs} \oplus H_{sing} \oplus H_{dil} \oplus H_{s}$ being the Lebesgue-Wold decomposition.
Further write $H_{dil} = V \oplus \bigoplus_{w\in \bF_n^+} S_w K$, as described in Remark~\ref{rem: unitary types}(iii).
It follows from Theorem~\ref{thm: lebesgue wold} and Theorem~\ref{thm: structure thm} that
$$ PH = H_{sing} \oplus V. $$
If $S$ is a row-isometry of dilation-type on $H = V \oplus \bigoplus_{w\in\bF_n^+} S_w K$, then for each $v \in V$ there is a $w \in \bF_n^+$ so that $S_w v \notin V$, see e.g. \cite[Lemma~4.9]{FulYan2015}.
This is precisely the property which separates $V$ from $H_{sing}$ in $PH$.

\begin{corollary}\label{cor: H sing}
If $h \in PH$, then $h \in H_{sing}$ if and only if $S_w h \in PH$ for all $w\in \bF_n^+$
\end{corollary}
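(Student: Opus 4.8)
The plan is to exploit the identity $PH = H_{sing} \oplus V$ recorded just before the statement, where $V$ is the minimal $S^*$-invariant subspace of $H_{dil}$ appearing in the splitting $H_{dil} = V \oplus \bigoplus_{w \in \bF_n^+} S_w K$. Since $H_{sing}$, $H_{dil}$ and the other summands all reduce $S$ by Theorem~\ref{thm: lebesgue wold}, I can test membership in $H_{sing}$ by tracking the position of a vector inside $PH$, so the whole argument reduces to separating the two orthogonal pieces $H_{sing}$ and $V$ of $PH$ under the action of the words $S_w$.

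For the forward implication, suppose $h \in H_{sing}$. Because $H_{sing}$ reduces $S$ it is invariant under every $S_w$, so each $S_w h$ again lies in $H_{sing}$; as $H_{sing} \subseteq PH$ this gives $S_w h \in PH$ for all $w \in \bF_n^+$. This direction uses only that $H_{sing}$ is $S$-invariant and contained in the range of the structure projection.

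For the converse, take $h \in PH$ with $S_w h \in PH$ for every $w$, and write $h = h_0 + v$ with $h_0 \in H_{sing}$ and $v \in V$; the goal is to force $v = 0$. Since $S_w h_0 \in H_{sing} \subseteq PH$, subtraction yields $S_w v \in PH$ for all $w$. On the other hand $v \in V \subseteq H_{dil}$ and $H_{dil}$ reduces $S$, so $S_w v \in H_{dil}$. Combining $S_w v \in PH$ with the fact that $PH \cap H_{dil} = V$ (immediate from $PH = H_{sing} \oplus V$ together with $H_{sing} \perp H_{dil}$), I conclude $S_w v \in V$ for all $w \in \bF_n^+$.

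The decisive step is then the dilation-type property recalled before the statement: if $v \in V$ were nonzero there would exist some $w \in \bF_n^+$ with $S_w v \notin V$, contradicting what was just derived. Hence $v = 0$ and $h = h_0 \in H_{sing}$. I expect this last invocation to be the crux of the argument; everything else is bookkeeping with orthogonal reducing subspaces, whereas the implication ``$S_w v \in V$ for all $w \Rightarrow v = 0$'' is precisely the structural feature from \cite[Lemma~4.9]{FulYan2015} that distinguishes a dilation-type vector in $V$ from a genuinely singular one.
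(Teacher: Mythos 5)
Your argument is correct, but it is not the route the paper takes. The paper's own proof of the converse never touches the decomposition $PH = H_{sing}\oplus V$ or the escape property from \cite[Lemma~4.9]{FulYan2015}: instead it forms the cyclic invariant subspace $M = \spn\{S_w h : w\in\bF_n^+\overline{\}}^{\|\cdot\|}$, notes that the hypothesis forces $M\subseteq PH$, deduces from the self-adjointness of $P\S P$ and the $S^*$-invariance of $PH$ that $M$ is in fact $\S^*$-invariant, hence $S$-reducing, and then observes that $S|_M$ is singular, so $M\subseteq H_{sing}$. Your proof instead writes $h = h_0 + v$ along $PH = H_{sing}\oplus V$, shows $S_w v \in PH\cap H_{dil} = V$ for every word $w$, and kills $v$ with the dilation-type property that a nonzero $v\in V$ must eventually be mapped out of $V$ by some $S_w$. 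Both are sound; your identification $PH\cap H_{dil}=V$ is an easy consequence of $H_{sing}\perp H_{dil}$, and the only point to flag is that the escape property must be read as applying to \emph{nonzero} $v\in V$ (as it is in \cite{FulYan2015}). What each approach buys: yours is the argument the paper's preceding paragraph actually advertises (``this is precisely the property which separates $V$ from $H_{sing}$''), and it makes the role of the dilation-type part completely explicit; the paper's proof is more self-contained within the structure theory of \cite{DKP2001}, needing only the structure projection and the characterisation of singularity, and so does not depend on the external lemma or on first establishing $PH = H_{sing}\oplus V$.
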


\begin{proof}
If $h \in H_{sing}$ then $S_w h \in H_{sing}$ for all $w\in \bF_n^+$, since $H_{sing}$ reduces $S$.

Conversely, let $M = \spn\{S_w h \colon w \in \bF_n^+\overline{\}}^{\|\cdot\|}$ for some $h \in H_{ing}$.
Assume $Pm = m$ for all $m \in M$.
As $M$ is $\S$ invariant, and $M \subseteq PH$, it follows that $M$ is $\S^*$-invariant.
Thus $M$ reduces $S$ and $S|_M$ is singular. 
Hence $M \subset H_{sing}$.
\end{proof}

\section{S\l oci\'nski-Wold decompositions for $\theta$-commuting row-isometries}
\begin{definition}
Let $A=[A_1,\ldots,A_m]$ and $B=[B_1,\ldots,B_n]$ be two row-operators on a Hilbert space $H$, and let $\theta \in S_{m\times n}$ be a permutation.
We say that $A$ and $B$ \emph{$\theta$-commute} if
$$ A_i B_j =B_{j'}A_{i'} $$
when $\theta(i,j) = (i',j')$.
When $\theta$ is the identity permutation we will say that $A$ and $B$ \emph{commute}.

If $A$ and $B$ are $\theta$-commuting row-operators which further satisfy
\begin{align*}
 B_j^* A_i &= \sum_{\theta(k,j) = (i,j_k)} A_k B_{j_k}^*, \text{ and}\\
 A_i^* B_j &= \sum_{\theta(i,k) = (i_k, j)}B_k A_{i_k}^*
\end{align*}
we say that $A$ and $B$ \emph{$\theta$-doubly commute}.
\end{definition}

The following lemma is proved by repeated applications of the commutation rule from $\theta$.
It will be used liberally in the sequel.

\begin{lemma}
Let $A=[A_1,\ldots,A_m]$ and $B=[B_1,\ldots,B_n]$ be $\theta$-commuting row-operators.
For each $k, l \geq 1$ $\theta$ determines a permutation $\theta_{k,l} \in S_{m^k \times n^l}$ so that
$$ A_u B_w = B_{w'} A_{u'} $$
when $\theta_{k,l}(u,w) = (u',w')$.
\end{lemma}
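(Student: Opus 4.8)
The plan is to build $\theta_{k,l}$ by repeatedly applying the single-letter rule $A_iB_j=B_{j'}A_{i'}$ and to check at each stage that the elementary move is a bijection of the relevant finite index set, so that the composite map on pairs is again a bijection. Throughout, for $u=i_1\ldots i_k\in\bF_m^+$ and $w=j_1\ldots j_l\in\bF_n^+$ I write $A_u=A_{i_1}\ldots A_{i_k}$ and $B_w=B_{j_1}\ldots B_{j_l}$.

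First I would treat the case $k=1$, pushing a single $A_i$ past a word $w=j_1\ldots j_l$ of length $l$. Applying $\theta$ to the adjacent pair $A_iB_{j_1}$ produces $B_{j_1'}A_{i_1}$; applying $\theta$ again to $A_{i_1}B_{j_2}$ frees the second letter, and after $l$ steps one obtains $A_iB_w=B_{w'}A_{i'}$ with $w'=j_1'\ldots j_l'$ and $i'=i_l$. To see that $(i,w)\mapsto(i',w')$ is a permutation of $\{1,\ldots,m\}\times\{1,\ldots,n\}^l$, I would observe that the $p$-th step is exactly the map that applies $\theta$ to the coordinates consisting of the current $A$-index and the $p$-th $B$-index, while fixing the remaining $B$-indices; this is a bijection because $\theta$ is, and $\theta_{1,l}$ is the composition of these $l$ bijections. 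This produces $\theta_{1,l}\in S_{m\times n^l}$.

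With $\theta_{1,l}$ in hand I would induct on $k$. Writing $u=i_1u''$ with $|u''|=k-1$, the inductive hypothesis gives $A_{u''}B_w=B_{w''}A_{\tilde u}$ via $\theta_{k-1,l}$, and then $\theta_{1,l}$ yields $A_{i_1}B_{w''}=B_{w'}A_{i_1'}$, so that $A_uB_w=B_{w'}A_{u'}$ with $u'=i_1'\tilde u$. The associated map on pairs is the composition of $\theta_{k-1,l}$, acting on the $(u'',w)$ coordinates and fixing $i_1$, with $\theta_{1,l}$, acting on the $(i_1,w'')$ coordinates and fixing $\tilde u$; as a composition of bijections it is a permutation, which I would take to be $\theta_{k,l}\in S_{m^k\times n^l}$.

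The only point requiring care, and the single place I expect to stumble, is the bijectivity claim, namely that the explicit procedure defines a genuine \emph{permutation} rather than merely a well-defined assignment. The cleanest justification is the one above: exhibit each elementary move as $\theta$ applied to two coordinates and the identity on the rest, so that invertibility is inherited from $\theta$ (concretely, the inverse move pushes $B$ back past $A$ using $\theta^{-1}$). Once this is granted, the identity $A_uB_w=B_{w'}A_{u'}$ is immediate from the construction, and no further computation is needed.
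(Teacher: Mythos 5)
Your argument is correct and is exactly the approach the paper intends: the paper states only that the lemma ``is proved by repeated applications of the commutation rule from $\theta$,'' and your write-up carries this out, with the worthwhile extra care of verifying that each elementary move is $\theta$ acting on two coordinates (hence invertible), so that $\theta_{k,l}$ is genuinely a permutation.
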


Any $2$-graph with a single vertex, in the sense of \cite{KumPas2000}, is uniquely determined by a single permutation.
Thus, two contractive $\theta$-commuting row-contractions $A$ and $B$ determine a contractive representation of single vertex $2$-graph.
This is the perspective $\theta$-commuting row-operators are studied from in, e.g., \cite{DPY2010, DavYan2009, FulYan2015}.

\begin{definition}
Let $S=[S_1,\ldots, S_m]$ and $T=[T_1,\ldots,T_n]$ be $\theta$-commuting row-isometries on a Hilbert space $H$.
We say that $S$ and $T$ have a \emph{\sloc-Wold decomposition} if $H$ decomposes into
$$ H = H_{uu} \oplus H_{us} \oplus H_{su} \oplus H_{ss}, $$
where $H_{uu}$, $H_{us}$, $H_{su}$ and $H_{ss}$ are both $S$-reducing and $T$-reducing subspaces satistfying:
\begin{enumerate}
    \item $S|_{H_{uu}}$ and $T|_{H_{uu}}$ are both row-unitaries;
    \item $S|_{H_{us}}$ is a row-unitary and $T|_{H_{us}}$ is an $n$-shift;
    \item $S|_{H_{su}}$ is an $m$-shift and $T|_{H_{su}}$ is a row-unitary;
    \item $S|_{H_{ss}}$ is an $m$-shift and $T|_{H_{ss}}$ is an $n$-shift.
\end{enumerate}
\end{definition}

The following general lemma will be used throughout our analysis.

\begin{lemma}\label{lem: Hu inv}
$S=[S_1,\ldots,S_m]$ a row-isometry which $\theta$-commutes with a row-operator $A = [A_1,\ldots, A_l]$.
Let $H = H_u \oplus H_s$ be the Wold decomposition of $S$.
Then $H_u$ is $A$-invariant.
\end{lemma}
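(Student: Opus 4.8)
The plan is to exploit the explicit formula for the unitary part coming from Popescu's Wold decomposition (Theorem~\ref{thm: wold}), namely
\[ H_u = \bigcap_{k\geq 0} \bigoplus_{|w| = k} S_w H, \]
and to show directly that each generator $A_j$, $1\leq j \leq l$, carries this intersection into itself. First I would fix $h \in H_u$ and an integer $k\geq 0$. Since $h$ lies in the $k$-th layer $\bigoplus_{|w|=k} S_w H$ and the ranges of the $S_w$ with $|w|=k$ are pairwise orthogonal (as $S$ is a row-isometry), I can write $h$ as a finite orthogonal sum $h = \sum_{|w|=k} S_w h_w$ with $h_w = S_w^* h \in H$. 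Applying $A_j$ gives $A_j h = \sum_{|w|=k} A_j S_w h_w$, so the task reduces to placing each summand $A_j S_w h_w$ back inside $\bigoplus_{|v|=k} S_v H$.

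The key step is to commute $A_j$ to the \emph{right} of $S_w$. The relation defining $\theta$-commutation, $S_i A_j = A_{j'} S_{i'}$ whenever $\theta(i,j)=(i',j')$, can be read backwards precisely because $\theta$ is a permutation and hence invertible: for each pair $(i',j')$ there is a unique $(i,j) = \theta^{-1}(i',j')$, whence $A_{j'} S_{i'} = S_i A_j$. Iterating this (equivalently, applying the induced permutation $\theta_{k,1} \in S_{m^k \times l}$ of the preceding lemma in its inverse form) yields, for each word $w$ with $|w| = k$ and each index $j$, a word $w''$ with $|w''| = k$ and an index $j''$ such that $A_j S_w = S_{w''} A_{j''}$. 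The length-preserving bookkeeping here is the crucial feature. It follows that $A_j S_w h_w = S_{w''}(A_{j''} h_w) \in S_{w''} H$, which is one of the orthogonal summands of $\bigoplus_{|v|=k} S_v H$. Summing over the finitely many $w$ of length $k$ then gives $A_j h \in \bigoplus_{|v|=k} S_v H$.

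Since $k$ was arbitrary, $A_j h$ belongs to every layer and therefore to their intersection $H_u$; as $j$ and $h$ were arbitrary, $H_u$ is $A$-invariant. The one point requiring care—and the only genuine obstacle—is the legitimacy of the reverse commutation $A_j S_w = S_{w''} A_{j''}$, which rests entirely on $\theta$ being a bijection rather than merely a map; the rest is routine. I would also note, for emphasis, that the argument uses nothing about $A$ beyond the commutation relations (in particular $A$ need only be a row-operator, not a row-isometry), since the only requirement on $A_{j''} h_w$ is that it lie in $H$.
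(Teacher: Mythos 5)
Your proposal is correct and follows essentially the same route as the paper: decompose $h=\sum_{|w|=k}S_wS_w^*h$ in each layer, commute $A_j$ past $S_w$ via the length-preserving relation $A_jS_w=S_{w''}A_{j''}$, and conclude from the characterisation $H_u=\bigcap_{k\geq 0}\bigoplus_{|w|=k}S_wH$. Your explicit remark that the reverse commutation relies on $\theta$ being a bijection is a point the paper's proof uses silently, but the argument is the same.
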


\begin{proof}
Take $h \in H_u$ and fix $k \geq 0$.
Since $S$ is a row-unitary on $H_u$,
$$ h = \sum_{|w|=k} S_w S_w^* h.$$
Choose an $A_i$, $1\leq i \leq l$.
For each $w$ with $|w|=k$ there is a $w'$ with $|w'|=k$, and $i_w$ with $1\leq i_w \leq l$ so that $A_iS_w = S_{w'}A_{i_w}$.
Thus
\begin{align*}
    A_i h & = A_i \sum_{|w|=k} S_w S_w^* h \\
    & = \sum_{|w|=k}S_{w'}A_{i_w}S_w^*h
     \in \sum_{|w|=k} S_w H.
\end{align*}
Since this holds for all $k\geq 0$, $A_i H_u \subseteq H_u$ by Theorem~\ref{thm: wold}.
\end{proof}

We can now give a general statement on the existence of \sloc-Wold decompositions.
The case when $m = n = 1$ is covered in \cite[Proposition~3]{Slo1980}.

\begin{proposition}\label{prop: sloc exists}
Let $S = [S_1, \ldots, S_m]$ and $T = [T_1,\ldots,T_n]$ be $\theta$-commuting row-isometries on $H$.
Then $S$ and $T$ have a \sloc-Wold decomposition if and only if
\begin{enumerate}
    \item if $H = H_u^S \oplus H_s^S$ is the Wold decomposition of $S$, then $H_u^S$ reduces $T$; and
    \item if $H_s^S = H_u^T \oplus H_s^T$ is the Wold decomposition of $T|_{H_s^S}$, then $H_u^T$ reduces $S$.
\end{enumerate}
\end{proposition}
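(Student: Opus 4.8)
The plan is to read the whole statement off the Wold decomposition of Theorem~\ref{thm: wold}, applied first to $S$ and then to $T$ on the pieces it produces. Necessity is the easy half. If $H = H_{uu}\oplus H_{us}\oplus H_{su}\oplus H_{ss}$ is a \sloc-Wold decomposition, then $S$ restricts to a row-unitary on $H_{uu}\oplus H_{us}$ and to an $m$-shift on $H_{su}\oplus H_{ss}$, so the uniqueness of the Wold decomposition of $S$ forces $H_u^S = H_{uu}\oplus H_{us}$ and $H_s^S = H_{su}\oplus H_{ss}$. Each summand reduces $T$, hence so does $H_u^S$, which is condition (1). On $H_s^S$ the operator $T$ is a row-unitary on $H_{su}$ and an $n$-shift on $H_{ss}$, so uniqueness of the Wold decomposition of $T|_{H_s^S}$ identifies $H_u^T$ with $H_{su}$; as $H_{su}$ reduces $S$, this is condition (2).

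For sufficiency I would assemble the four spaces directly. Condition (1) gives that $H_u^S$ reduces $T$, and therefore so does $H_s^S = (H_u^S)^\perp$; thus $T$ restricts to a row-isometry on each, and I set $H_u^S = H_{uu}\oplus H_{us}$ and $H_s^S = H_{su}\oplus H_{ss}$ to be their Wold decompositions, with $H_{uu},H_{su}$ the row-unitary parts and $H_{us},H_{ss}$ the shift parts of $T$. All four reduce $T$ as Wold summands. Condition (2) says $H_{su} = H_u^T$ reduces $S$, whence its complement $H_{ss}$ inside the $S$-reducing space $H_s^S$ reduces $S$ as well, and the restrictions of the $m$-shift $S|_{H_s^S}$ to these reducing subspaces are again $m$-shifts by Theorem~\ref{thm: wold}. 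The only remaining point is that $H_{uu}$ and $H_{us}$ reduce $S$.

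This is where the argument really happens, and it uses that $S$ is a row-unitary on $K := H_u^S$. Lemma~\ref{lem: Hu inv} already gives that the row-unitary part $H_{uu}$ of $T|_K$ is $S$-invariant. I would combine this with two observations. The first is a splitting principle for row-unitaries: if $S$ is a row-unitary on $K$ and $K = A\oplus B$ with $A,B$ both $S$-invariant, then the identity $K = \bigoplus_i S_iK$ forces $A = \bigoplus_i S_iA$ and $B = \bigoplus_i S_iB$, so $A$ and $B$ in fact reduce $S$. By this principle it is enough to show the shift part $H_{us} = \bigoplus_{w\in\bF_n^+} T_w M$, where $M = K\ominus\bigoplus_j T_jK$, is $S$-invariant. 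The second observation, which I expect to be the main obstacle, is that a row-unitary $S$ that $\theta$-commutes with $T$ automatically satisfies the adjoint relation $S_i^* T_j = \sum_{\theta(i,k)=(i_k,j)} T_k S_{i_k}^*$; this is the row-isometric counterpart of the fact that an isometry commuting with a unitary doubly-commutes with it, and it should drop out by inserting $I_K = \sum_{i'} S_{i'}S_{i'}^*$ to the right of $T_j$ and rewriting each $T_j S_{i'}$ using that $\theta$ is a bijection. Granting it, $S_i^*$ carries $\bigoplus_j T_jK$ into itself, so for $\mu\in M$ and $g\in H_{uu}\subseteq\bigoplus_j T_jK$ one has $\langle S_i\mu,g\rangle = \langle\mu,S_i^* g\rangle = 0$; hence $S_i\mu\perp H_{uu}$, giving $S_iM\subseteq H_{us}$, and then $S_iT_wM = T_{w'}S_{i'}M\subseteq H_{us}$ by $\theta$-commutation and $T$-invariance of $H_{us}$. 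Thus $H_{us}$ is $S$-invariant, the splitting principle makes both $H_{uu}$ and $H_{us}$ reduce $S$, and the four summands have precisely the unitary/shift pattern demanded by the definition.
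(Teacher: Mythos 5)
Your proof is correct, and its skeleton is the same as the paper's: necessity from uniqueness of the Wold decomposition, and sufficiency by taking the Wold decomposition of $S$ and then of $T$ on each summand, with the only real work being to show that the row-unitary part of $T|_{H_u^S}$ reduces $S$. Where you genuinely diverge is in how that point is established. The paper works directly on the unitary part $K_u^T$: for $h \in K_u^T$ it writes $h = \sum_{|w|=k} T_w T_w^* h$, inserts $\sum_l S_l S_l^*$ (legitimate because $S$ is a row-unitary on $H_u^S$), and uses bijectivity of $\theta$ to get $S_i^* h \in \bigoplus_{|w|=k} T_w H_u^S$ for every $k$, whence $S_i^* h \in K_u^T$ by the intersection formula of Theorem~\ref{thm: wold}. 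You instead isolate the underlying identity $S_i^* T_j = \sum_{\theta(i,k)=(i_k,j)} T_k S_{i_k}^*$, valid on $H_u^S$ --- i.e., $\theta$-commutation automatically upgrades to half of $\theta$-double commutation on the unitary part of $S$ --- use it to show each $S_i$ carries the wandering space $M$ of $T|_{H_u^S}$ into the shift part $H_{us}$, and then apply a splitting principle for row-unitaries to convert $S$-invariance of both summands into $S$-reducibility. Both routes rest on the same insertion of $\sum_l S_l S_l^*$; yours is slightly longer but extracts two reusable facts (the automatic adjoint relation and the splitting lemma, both of which are correct as stated), while the paper's single computation is more compact. There is no gap in your argument.
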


\begin{proof}
If $S$ and $T$ have a \sloc-Wold decomposition then conditions (1) and (2) are clearly satisfied.

Suppose now that conditions (1) and (2) are satisfied.
Let $H = H_u^S \oplus H_s^S$ be the Wold decomposition for $S$.\
Let $H_u^S = K_u^T \oplus K_s^T$ be the Wold decomposition of $H_u^S$ from the restriction of $T$ to $H_u^S$.
By Lemma~\ref{lem: Hu inv}, $K_u^T$ is $S$-invariant.
Take any $1\leq i \leq m$, and $h \in K_u^T$.
Then for every $k\geq 1$
\begin{align*}
    S_i^*h & = S_i^* \sum_{|w|=k}T_wT_w^* h \\
    & =  \sum_{|w|=k}S_i^* T_wT_w^* h \\
    &= \sum_{|w|=k}\sum_{l = 1}^m S_i^* T_w S_l S_l^* T_w^* h \\
    &= \sum_{|w|=k}\sum_{\theta_{1,k}(i,w_i) = (l,w)}T_{w_i}S_l^* T_w^* h \\
    & \in \bigoplus_{|w|=k}T_w H_u^S,
\end{align*}
where the fact that $S$ is a row-unitary on $H_u^S$ is used in the third equality.
It follows from Theorem~\ref{thm: wold} that $S_i^*h \in K_u^T$.
Hence $K_u^T$ is $S$-reducing.

Letting $H_s^S = H_u^T \oplus H_s^T$ be the Wold decomposition of $T|_{H_u^S}$, we have that
$H_{uu} = K_u^T$, $H_{us} = K_s^T$, $H_{su} = H_u^T$, and $H_{ss} = H_s^T$ gives the desired \sloc-Wold decomposition.
\end{proof}

Skalski and Zacharias studied Wold decompositions of isometric representations of product systems of C$^*$-correspondences \cite{SkaZac2008}.
The following is a special case of one of their results.

\begin{theorem}[{cf. \cite[Theorem~2.4]{SkaZac2008}}]\label{thm: skalski zach}
If $S$ and $T$ are $\theta$-double commuting row-isometries then they have a \sloc-Wold decomposition.
\end{theorem}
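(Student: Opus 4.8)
The plan is to invoke Proposition~\ref{prop: sloc exists}, which reduces the existence of a \sloc-Wold decomposition to its two conditions: that the unitary part $H_u^S$ of the Wold decomposition of $S$ reduces $T$, and that the unitary part $H_u^T$ of the Wold decomposition of $T|_{H_s^S}$ reduces $S$. The $\theta$-double commuting hypothesis is symmetric in $S$ and $T$: the relation $S_iT_j = T_{j'}S_{i'}$ rewrites as a $\theta'$-commuting relation for $T$ and $S$ with $\theta'$ determined by $\theta^{-1}$ and a coordinate swap, and the two adjoint relations interchange under $S \leftrightarrow T$. Thus both conditions will follow from a single computation applied in each order.

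First I would record the word-level form of the adjoint commutation relations. By iterating $T_j^*S_i = \sum_{\theta(k,j)=(i,j_k)} S_k T_{j_k}^*$ one generator at a time, I would prove by induction on $|w|$ that for each index $j$ and each word $w$,
\[ T_j^* S_w = \sum_\alpha S_{w'_\alpha} T_{j'_\alpha}^*, \]
where every $w'_\alpha$ satisfies $|w'_\alpha| = |w|$ and each $j'_\alpha$ is a single index. The companion relation $S_i^* T_j = \sum_{\theta(i,k)=(i_k,j)} T_k S_{i_k}^*$ yields in the same way $S_i^* T_w = \sum_\beta T_{w'_\beta} S_{i'_\beta}^*$ with $|w'_\beta|=|w|$.

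Next I would verify condition (1). By Lemma~\ref{lem: Hu inv} the subspace $H_u^S$ is $T$-invariant, so it remains to check $T^*$-invariance. Given $h \in H_u^S$ and $k \geq 0$, the fact that $S$ is a row-unitary on $H_u^S$ gives $h = \sum_{|w|=k} S_w S_w^* h$; applying $T_j^*$ and the word-level relation above,
\[ T_j^* h = \sum_{|w|=k} \sum_\alpha S_{w'_\alpha} T_{j'_\alpha}^* S_w^* h \in \bigoplus_{|w'|=k} S_{w'} H. \]
As this holds for every $k$, the description $H_u^S = \bigcap_{k\ge 0}\bigoplus_{|w|=k} S_w H$ from Theorem~\ref{thm: wold} forces $T_j^* h \in H_u^S$. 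Hence $H_u^S$ reduces $T$, and consequently so does its orthogonal complement $H_s^S$; in particular $T|_{H_s^S}$ is a genuine row-isometry, so condition (2) is meaningful.

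Finally, condition (2) follows by the same argument with the roles of $S$ and $T$ exchanged, now carried out inside the reducing subspace $H_s^S$: there $T$ is a row-isometry while $S$ restricts to an $m$-shift, i.e.\ a row-operator $\theta'$-commuting with $T$, so Lemma~\ref{lem: Hu inv} makes the unitary part $H_u^T$ of $T|_{H_s^S}$ invariant for $S$, and the relation $S_i^* T_w = \sum_\beta T_{w'_\beta} S_{i'_\beta}^*$ gives $S^*$-invariance exactly as above. Proposition~\ref{prop: sloc exists} then delivers the \sloc-Wold decomposition. The one point requiring genuine care---and the main obstacle---is the inductive bookkeeping behind the word-level adjoint relations: one must check that pushing a single adjoint generator through an arbitrary word of isometries always returns a finite sum of terms of the form (word of equal length) times (single adjoint generator), and that the index sets produced by $\theta$ compose correctly at each step. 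Everything else is a direct application of Theorem~\ref{thm: wold} and Lemma~\ref{lem: Hu inv}.
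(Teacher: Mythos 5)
Your proof is correct and takes essentially the same approach as the paper: both verify the two conditions of Proposition~\ref{prop: sloc exists} by combining Lemma~\ref{lem: Hu inv} with a computation showing $T^*$-invariance of $H_u^S$ via $h=\sum_{|w|=k}S_wS_w^*h$ and the iterated adjoint commutation relation, then swap the roles of $S$ and $T$. The word-level induction you flag as the main point of care is exactly what the paper packages into its $\theta_{k,1}$ notation.
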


\begin{proof}
Let $H = H^S_u \oplus H^S_s$ be the Wold decomposition of $H$ from $S$.
We we will show that $H_u^S$ is $T$-reducing.
Lemma~\ref{lem: Hu inv} gives that $H_u^S$ is $T$-invariant, so it only remains to show that $H_u^S$ is $T^*$-invariant.
Take $1\leq j \leq n$ and $h \in H_u^S$.
Using the condition that $S$ and $T$ $\theta$-doubly commute and that $S$ is a row-unitary on $H_u^S$ we have for every $k \geq K$
\begin{align*}
    T_j^* h &= \sum_{|w|=k} T_j^* S_w S_w^*h \\
    & = \sum_{\theta_{k,1}(w_k,j) = (w,j_w)} S_{w_k}T_{j_w}^* S_w^* h\\
    &\in \sum_{|w|=k}S_w H.
\end{align*}
Thus $T_j^* h \in H_u^S$ by Lemma~\ref{thm: wold}.

Now, let $H_s^S = H_u^T \oplus H_s^T$ be the Wold decomposition of $T|_{H_s^S}$.
The same calculation as above, we the roles of $S$ and $T$ swapped, shows that $H_u^T$ is $S$-reducing.
Thus $S$ and $T$ have a \sloc-Wold decomposition by Proposition~\ref{prop: sloc exists}.
\end{proof}

\begin{remark}
As described in \cite{SkaZac2008}, the \sloc-Wold decomposition for $\theta$-doubly commuting row-isometries has additional structure on the shift part $H_{ss}$.
On $H_{ss}$ $S$ and $T$ are not just both ($m$ and $n$) shifts.
The operators $S$ and $T$ work as shifts \emph{together}, giving an ampliation of the left-regular representation of the unital semigroup
$$ F_\theta^+ = \< \ i_1,\ldots,i_m, j_1,\ldots, j_n \colon i_k j_l = j'i' \text{ when } \theta(i_k,j_l)=(i',l') \>. $$
Explicitly, if $M = \bigcap_{i=1}^m \ker S_i^* \cap \bigcap_{j=1}^n \ker T_j^*$ then
$$ H_{ss} = \bigoplus_{u \in \bF_m^+,\ w\in \bF_n^+} S_u T_w M. $$
\end{remark}

Theorem~\ref{thm: skalski zach} generalises Theorem~3 of \cite{Slo1980}.
In the rest of this note we will give analogues of Theorem~4 and Theorem~5 of \cite{Slo1980} for $\theta$-commuting row-isometries.
That is, we will give sufficient conditions for the existence of a \sloc-Wold decomposition for $\theta$-commuting row-isometries based on the Lebesgue decomposition of their unitary parts.

Let $V$ be an isometry on a Hilbert space $H$ and $N \in B(H)$ be an operator commuting with $V$.
Let $H = H_{abs} \oplus H_{sing} \oplus H_s$ be the Lebesgue-Wold decomposition of $V$.
It then follows from \cite[Theorem~2.1]{Mla1972} that $H_{sing}$ reduces $N$.
This is a key tool in \cite[Theorem~4]{Slo1980} and \cite[Theorem~5]{Slo1980}.
We show now that this holds for operators commuting with row-isometries also.
Unfortunately, this result is not readily applicable to $\theta$-commuting row-operators.

\begin{proposition}\label{prop: Hsing red}
Let $S = [S_1,\ldots,S_m]$ be a row-isometry on $H$ and let $N$ be an operator which commutes with each $S_i$, $1\leq i \leq m$.
If the Lebesgue-Wold decomposition of $S$ decomposes $H$ as
$H_{abs} \oplus H_{sing} \oplus H_{dil} \oplus H_{s}$, then $H_{sing}$ reduces $N$.
\end{proposition}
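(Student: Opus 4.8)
The plan is to show that $N$ commutes with the orthogonal projection $Q$ onto $H_{sing}$. First I would record that, since multiplication is separately \textsc{wot}-continuous, an operator commuting with each $S_i$ commutes with every element of the free semigroup algebra $\S$; in particular $N$ commutes with the structure projection $P$ of Theorem~\ref{thm: structure thm}, so $PH$ reduces $N$. Recalling from the discussion following Theorem~\ref{thm: structure thm} that $PH = H_{sing}\oplus V$, it then remains only to locate $H_{sing}$ inside $PH$.

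The inclusion $NH_{sing}\subseteq H_{sing}$ is immediate from Corollary~\ref{cor: H sing}: if $h\in H_{sing}$ then $Nh\in PH$, and for every $w\in\bF_m^+$ we have $S_w(Nh) = N(S_wh)$, which lies in $PH$ because $S_wh\in H_{sing}\subseteq PH$ and $PH$ reduces $N$; hence $Nh\in H_{sing}$. The only property used here is that $N$ commutes with each $S_w$.

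The reverse inclusion $N^*H_{sing}\subseteq H_{sing}$ is the crux, and this is where the asymmetry bites: $N^*$ commutes with the $S_i^*$ but not with the $S_i$, so Corollary~\ref{cor: H sing} is no longer directly available. Since $PH$ reduces $N$ and $PH = H_{sing}\oplus V$, this inclusion reduces to $NV\subseteq V$. Writing $A = [A_1,\dots,A_m]$ for the compression of $S$ to $V$ and $B = QN|_V\colon V\to H_{sing}$, a short computation with $NS_w = S_wN$ together with the orthogonal splitting $S_w v = A_w v + (\text{shift part})$ yields the intertwining relation $S_w^{sing}B = BA_w$, where $S^{sing} = S|_{H_{sing}}$. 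Thus $NV\subseteq V$ is equivalent to $B = 0$.

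It remains to deduce $B = 0$, and I expect this to be the main obstacle. From the intertwining relation and the fact that $S^{sing}$ is a row-isometry one obtains $A_j^* B^* B A_i = \delta_{ij}\,B^*B$, so $X = B^*B$ turns the $A_i$ into isometries with orthogonal ranges in the $X$-seminorm; the induced tuple on $V_X := V/\ker X$ is a row-isometry unitarily equivalent to a subrepresentation of the singular $S^{sing}$, hence itself singular. On the other hand $A$ is the compression of the dilation-type piece $S|_{H_{dil}}$, whose defining feature (Remark~\ref{rem: unitary types}(iii) and \cite[Lemma~4.9]{FulYan2015}) is that every nonzero vector of $V$ is eventually moved out of $V$ by some $S_w$; this separation of $V$ from the singular part should be incompatible with $V_X\neq 0$, forcing $X = 0$ and hence $B = 0$, so that $H_{sing}$ reduces $N$. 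The delicate point, which I would isolate as a lemma, is precisely this last incompatibility --- that the dilation-type part carries no nonzero singular quotient --- and I anticipate that establishing it cleanly will require the disjointness of singular and absolutely continuous row-isometries from \cite{Ken2013, DKP2001} rather than the separation property on its own.
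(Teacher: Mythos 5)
Your first two steps coincide with the paper's: $N$ commutes with the structure projection $P$ because $P$ lies in the free semigroup algebra $\S$, so $PH$ reduces $N$, and then $NH_{sing}\subseteq H_{sing}$ follows from Corollary~\ref{cor: H sing}. The divergence, and the gap, is in the $N^*$ direction. Your reduction of $N^*H_{sing}\subseteq H_{sing}$ to the vanishing of the intertwiner $B=QN|_V$, and the relation $S_w^{sing}B=BA_w$, are correct as far as they go, but the proof then rests entirely on the claim that the compression $A$ of the dilation-type part to $V$ admits no nonzero intertwiner into a singular row-isometry. You explicitly flag that you have not established this, and it does not follow from the separation property you cite (that every $v\in V$ is moved out of $V$ by some $S_w$): that property concerns the isometries $S_w$ on $H_{dil}$, not the compressed tuple $A_w$, and ruling out a singular ``quotient'' of $A$ would require a genuine disjointness argument from \cite{Ken2013} that you have not supplied. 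As written, the proof is incomplete at its crucial step.

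The paper closes this step without ever touching $V$ or the dilation-type structure. Since $P$ is self-adjoint, $N^*$ also commutes with $P$, so for $h\in H_{sing}$ one has $N^*S_wh\in PH\subseteq H_u$, the unitary part of $S$. Because $\sum_{|u|=|w|}S_uS_u^*$ acts as the identity on $H_u$, and $S_u^*N^*=N^*S_u^*$ with $S_u^*S_w=\delta_{u,w}I$ for $|u|=|w|$, one computes
\begin{equation*}
N^*S_wh=\sum_{|u|=|w|}S_uS_u^*N^*S_wh=S_wN^*h ,
\end{equation*}
so $S_wN^*h\in PH$ for every $w$, and Corollary~\ref{cor: H sing} gives $N^*h\in H_{sing}$ directly. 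In other words, although $N^*$ does not commute with the $S_i$ globally, it does on vectors of $H_{sing}$, which is exactly what Corollary~\ref{cor: H sing} needs; adopting this computation would let you discard the entire $B=0$ programme.
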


\begin{proof}
Let $P$ be the structure projection for $S$.
Since $P$ lies in the free semigroup algebra generated by $S$, $N$ commutes with $P$.
Thus $PH$ reduces $N$.
Now take any $h \in H_{sing}$, then for any $w\in \bF_n^+$
$$ PS_w Nh = N P S_w h = N S_w h = S_w N h. $$
Thus $Nh \in H_{sing}$ by Corollary~\ref{cor: H sing}.

As $P$ is self-adjoint we also have that $N^*$ commutes with $P$.
Thus, for any $h \in H_{sing}$, $N^*h \in PH \subseteq H_u$, the unitary part of $S$.
Hence, for any $w\in \bF_n^+$ and $h \in H_{sing}$
$$ N^* S_w h  = \sum_{|u|=|w|} S_uS_u^* N^* S_wh = S_w N^* h. $$
Hence $S_wN^* h$ is in  $PH$.
Again, by Corollary~\ref{cor: H sing}, it follows that $N^* h \in H_{sing}$.
Thus $H_{sing}$ reduces $N$.
\end{proof}

For $\theta$-commuting operators it is not clear that a direct analogue of Proposition~\ref{prop: Hsing red} holds.
We do, however, have the following lemma for $\theta$-commuting row-isometries
Proposition~\ref{prop: Hu reduce} shows this will suffice for our purposes.

\begin{lemma}\label{lem: PH *-invariant}
Let $S=[S_1,\ldots,S_m]$ be a row-isometry on $H$ with $m\geq 2$, and let $P$ be the structure projection for $S$.
If $T = [T_1,\ldots, T_n]$ is a row-isometry on $H$ which $\theta$-commutes with $S$.
Then $PH$ is $T^*$-invariant.
\end{lemma}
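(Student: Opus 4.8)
The plan is to pass to orthogonal complements and reduce the statement to an invariance property of the absolutely continuous part. Since $P$ is a projection, $PH$ is $T_j^*$-invariant for every $j$ if and only if $P^\perp H$ is $T_j$-invariant for every $j$, so it suffices to show that $P^\perp H$ is $T$-invariant. By Theorem~\ref{thm: structure thm} the restriction $S|_{P^\perp H}$ is absolutely continuous; combining this with Theorem~\ref{thm: lebesgue wold} and the theory of Davidson--Li--Pitts and Kennedy, $P^\perp H$ is the largest $S$-invariant subspace on which $S$ restricts to an absolutely continuous row-isometry (equivalently, the space of absolutely continuous vectors of $S$). Thus the whole problem becomes: show that the vectors $T_j(P^\perp H)$ are again absolutely continuous for $S$.

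First I would assemble these images into a single $S$-invariant subspace. Set $K=\bigvee_{j=1}^n T_j(P^\perp H)$. Because $T$ is a row-isometry the ranges $T_j(P^\perp H)$ are pairwise orthogonal, so $K=\bigoplus_{j=1}^n T_j(P^\perp H)$ and the map $U\colon (P^\perp H)^{(n)}\to K$, $(h_1,\dots,h_n)\mapsto\sum_j T_jh_j$, is unitary. Using the $\theta$-commutation relation $S_iT_j=T_{j'}S_{i'}$ (with $\theta(i,j)=(i',j')$) and the $S$-invariance of $P^\perp H$, one checks $S_iT_j(P^\perp H)=T_{j'}S_{i'}(P^\perp H)\subseteq T_{j'}(P^\perp H)\subseteq K$, so $K$ is $S$-invariant. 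Transporting through $U$, the row-isometry $S|_K$ becomes a ``$\theta$-twisted'' $n$-fold ampliation $\tilde S$ of $S|_{P^\perp H}$: the operator $\tilde S_i$ sends a vector in the $j$-th copy of $P^\perp H$ to $S_{i'}$ of it in the $j'$-th copy, where $\theta(i,j)=(i',j')$.

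The heart of the argument, and the step I expect to be the main obstacle, is to prove that this twisted ampliation $\tilde S$ is again absolutely continuous. One cannot simply invoke unitary equivalence with the ordinary ampliation $(S|_{P^\perp H})^{(n)}$, since the twist genuinely relabels $S_i$ as $S_{i'}$ and there is in general no intertwiner between $S_i|_{P^\perp H}$ and $S_{i'}|_{P^\perp H}$. Instead I would verify absolute continuity directly from Definition~\ref{def: abs cont}(1), extending the weak-$*$ continuous $\cL_m$-functional calculus of $S|_{P^\perp H}$ through the finite $n\times n$ block structure of $\tilde S$ over $P^\perp H$: for $A\in A_m$ each entry of the block matrix of the canonical image of $A$ is the functional calculus of $S|_{P^\perp H}$ applied to a $\theta$-reindexing of $A$, and since $\theta$ induces the length-preserving permutations $\theta_{k,1}$ of the words in $\bF_m^+$, these reindexings should be completely contractive and weak-$*$ continuous on $\cL_m$. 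As there are only finitely many blocks, the canonical representation $A_m\to\overline{\alg\{I,\tilde S_1,\dots,\tilde S_m\}}^{\|\cdot\|}$ would then extend weak-$*$ continuously to $\cL_m$, giving absolute continuity of $\tilde S$. Making the reindexing maps and their complete contractivity precise, ideally by recognising them inside the Fock-space left-regular model of the single-vertex $2$-graph semigroup, is where the real care is needed.

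Granting that step, $S|_K$ is absolutely continuous on the $S$-invariant subspace $K$, so every vector of $K$ is an absolutely continuous vector for $S$; hence $K\subseteq P^\perp H$, and in particular $T_j(P^\perp H)\subseteq P^\perp H$ for each $j$, which is the desired invariance. Alongside this I would record the cleaner but weaker fact that replaces the commutant argument of Proposition~\ref{prop: Hsing red} (unavailable here, since $T_j$ need not commute with $\S$): the relation $S_wT_j=T_{j'}S_{w'}$ gives $T_j^*S_wT_j\in\{0\}\cup\{S_{w'}\}$, so by weak-operator continuity $T_j^*\S T_j\subseteq\S$, whence $T_j^*PT_j\in\S$ and $T_j^*PT_j$ commutes with $P$. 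This yields the block-diagonal compatibility of $T_j^*PT_j$ with $P$, but it does not by itself force $T_j(P^\perp H)\subseteq P^\perp H$; the genuine input is the absolute continuity of $S|_{P^\perp H}$, exactly as in the main step.
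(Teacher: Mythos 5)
Your reduction of the lemma to the statement ``$P^\perp H$ is $T$-invariant'' is the same first move the paper makes, but the way you then try to establish that invariance has a genuine gap at exactly the point you flag yourself. Everything rests on the claim that the $\theta$-reindexing maps $\sum_u a_u L_u \mapsto \sum_{\{u\,:\,j''(u)=j_0\}} a_u L_{u'}$ (restrict the Fourier support to a subset of words of each length and apply a length-preserving injection) are completely contractive and weak-$*$ continuous on $\cL_m$, so that the weak-$*$ functional calculus of $S|_{P^\perp H}$ can be pushed through the $n\times n$ block structure of the twisted ampliation. No argument is offered for this, and none is obvious: maps that truncate and permute Fourier coefficients of elements of $\cL_m$ are in general badly behaved, and the fact that the \emph{assembled} block matrix is a restriction of an operator on $H$ (hence bounded) does not give you boundedness or weak-$*$ continuity of the individual blocks, which is what your entry-by-entry extension needs. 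Until that multiplier claim is proved, the absolute continuity of $S|_K$ --- and hence the whole argument --- is unestablished. (Your preliminary identification of $P^\perp H$ with the closed span of the absolutely continuous vectors also leans on Kennedy's results beyond the theorems quoted in the paper, though that part is true.)

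The paper avoids this obstacle entirely by working with wandering vectors rather than with the functional calculus. By \cite[Corollary~4.17]{Ken2013}, absolute continuity of $S|_{P^\perp H}$ implies that $P^\perp H$ is the closed span of wandering vectors for $S$, and a two-line computation using $S_wT_j = T_{j'}S_{w'}$ and the orthogonality of the ranges of the $T_j$ shows that $\langle S_wT_jh, T_jh\rangle = 0$ for every word $w$ with $|w|\geq 1$ whenever $h$ is wandering; hence $T_j$ maps wandering vectors to wandering vectors, so $T_j(P^\perp H)\subseteq P^\perp H$. If you want to salvage your route, the cleanest fix is to replace the multiplier argument with exactly this observation: verify absolute continuity of the vectors $T_jh$ one wandering vector at a time instead of trying to transport the whole weak-$*$ representation.
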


\begin{proof}
By Theorem~\ref{thm: wold}, $S$ is absolutely continuous on $P^\perp H$.
Thus, by \cite[Corollary~4.17]{Ken2013}, $P^\perp H$ is spanned by wandering vectors for $S$.
Let $h$ be a wandering vector for $S$. 
Then for any $1\leq j \leq n$ and $w \in \bF_n^+$, $|w|\geq 1$ we have
\begin{align*}
    \< S_w T_j h, T_j h\> &= \<S_{w'}h, T_{j'}^*T_j h\>, 
\end{align*}
where $w'$ and $j'$ satisfy $S_w T_j = T_{j'}S_{w'}$.
If $j'\neq j$ then $T_{j'}^* T_j = 0$, in which case $\< S_w T_j h, T_j h\> = 0$.
If $j' = j$, then
\begin{align*}
    \< S_w T_j h, T_j h\> = \<S_{w'}h,h\> = 0,
\end{align*}
since $h$ is wandering for $S$ and $|w'|=|w|\geq 1$.
Hence $T_jh$ is wandering for $S$, and so $T_j h \in P^{\perp} H$.
It follows that $T_jP^{\perp} H \subseteq P^\perp H$, and hence $PH$ is $T^*$-invariant.
\end{proof}

\begin{proposition}\label{prop: Hu reduce}
Let $S=[S_1,\ldots,S_m]$ and $T = [T_1,\ldots, T_n]$ be $\theta$-commuting row-isometries on $H$.
Let $H = H_u \oplus H_s$ be the Wold decomposition for $S$.
If the unitary part of $S$ is singular, then $H_u$ reduces $T$
\end{proposition}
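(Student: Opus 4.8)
The plan is to prove that $H_u$ reduces $T$ by establishing $T$-invariance and $T^*$-invariance separately. The $T$-invariance is free: since $S$ is a row-isometry $\theta$-commuting with the row-operator $T$, Lemma~\ref{lem: Hu inv} applied with $A = T$ gives $T_j H_u \subseteq H_u$ for every $j$. All the content is therefore in the $T^*$-invariance of $H_u$, and the strategy is to identify $H_u$ with the range $PH$ of the structure projection $P$ of $S$ and then quote Lemma~\ref{lem: PH *-invariant}.

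The crux is the identification $H_u = PH$, which is exactly where the singularity hypothesis is used. By the convention fixed in the introduction, $S|_{H_u}$ being singular means that its Lebesgue-Wold decomposition contains neither an absolutely continuous nor a dilation-type summand; by uniqueness of that decomposition this forces $H_{abs} = 0$ and $H_{dil} = 0$, so that $H_u = H_{sing}$. On the other hand, the formula $PH = H_{sing} \oplus V$ recorded after Theorem~\ref{thm: structure thm}, together with $V \subseteq H_{dil} = 0$, gives $PH = H_{sing}$. Hence $H_u = PH$. I would write this step out carefully, as it is the only place the hypothesis enters and it quietly relies on the fact that singularity is strictly stronger than the mere absence of an absolutely continuous part.

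Granting $m \geq 2$, so that the structure projection is defined, the proof then closes at once: Lemma~\ref{lem: PH *-invariant} says $PH$ is $T^*$-invariant, and since $PH = H_u$ this is the remaining invariance we needed. Combined with the first step, $H_u$ is both $T$-invariant and $T^*$-invariant, hence $T$-reducing, as claimed.

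The main obstacle is not a computation but the bookkeeping behind $H_u = PH$, in particular confirming that singularity rules out the dilation-type part and hence that $V = 0$. A genuinely separate point is the degenerate case $m = 1$, where the structure projection and Lemma~\ref{lem: PH *-invariant} are unavailable and \emph{singular} reverts to its classical meaning for a single unitary. There I would instead exploit that the permutation induced by $\theta$ has finite order, so that a suitable power $S^p$ commutes with each $T_j$, and then invoke the commuting theory of Proposition~\ref{prop: Hsing red} (or \cite{Mla1972}) to conclude that the singular part reduces $T$.
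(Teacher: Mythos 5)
Your proposal is correct and follows essentially the same route as the paper: $T$-invariance from Lemma~\ref{lem: Hu inv}, the identification $H_u = PH$ via singularity (which the paper states without the bookkeeping you spell out), and then Lemma~\ref{lem: PH *-invariant} for $T^*$-invariance. The only divergence is the $m=1$ case, where the paper simply cites Mlak's theorem via \cite[Remark~2]{Slo1980}, while you sketch a workable alternative by passing to a power of $S$ that genuinely commutes with the $T_j$.
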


\begin{proof}
When $m = 1$, the result follows from \cite[Theorem~2.1]{Mla1972}, see \cite[Remark~2]{Slo1980}.
Otherwise, we have $H_u = PH$ where $P$ is the structure projection for $S$. The result follows from Lemma~\ref{lem: Hu inv} and Lemma~\ref{lem: PH *-invariant}.
\end{proof}

We now give a row-isometry analogue of \cite[Theorem~4]{Slo1980}.

\begin{theorem}\label{thm: singular parts}
Let $S=[S_1,\ldots,S_m]$ and $T=[T_1,\ldots,T_n]$ be $\theta$-commuting be a row-isometries on a Hilbert space $H$.
Further, suppose that the unitary parts of $S$ and $T$ are singular.
Then $S$ and $T$ have a \sloc-Wold decomposition.
\end{theorem}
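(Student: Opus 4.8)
The plan is to reduce everything to the two conditions in Proposition~\ref{prop: sloc exists} and to verify each of them by a single application of Proposition~\ref{prop: Hu reduce}. The argument is symmetric in $S$ and $T$: one condition is handled by running Proposition~\ref{prop: Hu reduce} for $S$, the other by running it for $T$ with the roles reversed.

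First I would take the Wold decomposition $H = H_u^S \oplus H_s^S$ of $S$. Since the unitary part of $S$ is singular by hypothesis, Proposition~\ref{prop: Hu reduce} gives immediately that $H_u^S$ reduces $T$, which is exactly condition~(1) of Proposition~\ref{prop: sloc exists}. Because $H_u^S$ reduces $T$ and is an orthogonal summand of $H$, its complement $H_s^S$ also reduces $T$, so $S$ and $T$ restrict to $\theta$-commuting row-isometries on $H_s^S$. I would then form the Wold decomposition $H_s^S = H_u^T \oplus H_s^T$ of $T|_{H_s^S}$ and apply Proposition~\ref{prop: Hu reduce} a second time, on the space $H_s^S$ and with the roles of $S$ and $T$ interchanged, to conclude that $H_u^T$ reduces $S|_{H_s^S}$, hence reduces $S$ on $H$. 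The interchange is harmless, since swapping the two row-isometries only replaces $\theta$ by another permutation and Proposition~\ref{prop: Hu reduce} is stated for an arbitrary permutation. This gives condition~(2), and Proposition~\ref{prop: sloc exists} then yields the \sloc-Wold decomposition.

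The step I expect to be the main obstacle is supplying the hypothesis for this second application: Proposition~\ref{prop: Hu reduce} requires the unitary part of $T|_{H_s^S}$ to be singular, while the theorem only assumes that the unitary part of $T$ on all of $H$ is singular. Here is where I would do the small amount of genuine work. The unitary part $H_u^T$ of $T|_{H_s^S}$ is a $T$-reducing subspace on which $T$ acts as a row-unitary, so by the explicit description of the unitary summand in Theorem~\ref{thm: wold} it is contained in the unitary part of $T$ on $H$; in particular $H_u^T$ reduces that singular row-unitary. Singularity is inherited by reducing subspaces --- an invariant subspace carrying an absolutely continuous restriction of $T|_{H_u^T}$ would already be such a subspace inside the singular unitary part of $T$ on $H$ --- so $T|_{H_u^T}$ is singular, as needed. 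Once this is established the two applications of Proposition~\ref{prop: Hu reduce} chain together cleanly and the remainder is routine.
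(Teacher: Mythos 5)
Your proof is correct and follows exactly the route the paper takes: the paper's entire proof is the single sentence that the result ``follows immediately from Proposition~\ref{prop: sloc exists} and Proposition~\ref{prop: Hu reduce}.'' The only difference is that you explicitly supply the detail the paper leaves implicit --- that the unitary part of $T|_{H_s^S}$ sits inside the (singular) unitary part of $T$ on $H$ and therefore inherits singularity --- which is exactly the right point to check before the second application of Proposition~\ref{prop: Hu reduce}.
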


\begin{proof}
The result follows immediately from Proposition~\ref{prop: sloc exists} and Proposition~\ref{prop: Hu reduce}.
\end{proof}

The following lemma generalises \cite[Lemma~2]{Slo1980} to row-isometries.
It is notable that the conditions are less restrictive for the row-isometry case than they are in single isometry case dealt with in \cite{Slo1980}.

\begin{lemma}\label{lem: unitary and shift 1}
Let $S$ be an $m$-shift of finite multiplicity on a Hilbert space $H$.
Let $T = [T_1,\ldots,T_n]$ be a row-unitary on $H$ which $\theta$-commutes with $S$.
If
\begin{enumerate}
    \item $n\geq 2$, or
    \item $n = 1$ and $T$ has empty point spectrum,
\end{enumerate}
then $H = \{0\}$.
\end{lemma}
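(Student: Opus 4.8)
The plan is to reduce everything to the wandering space of the shift $S$ and then to show that a row-unitary which $\theta$-commutes with $S$ must leave this finite-dimensional space invariant. Set $M = \bigcap_{i=1}^m \ker S_i^*$. Since $S$ is an $m$-shift of finite multiplicity, Theorem~\ref{thm: wold} gives $H = \bigoplus_{w \in \bF_m^+} S_w M$, with $\dim M$ equal to the (finite) multiplicity of $S$. Consequently $H = \{0\}$ if and only if $M = \{0\}$, so the entire lemma reduces to proving $M = \{0\}$.

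The key step is to establish that $T_j M \subseteq M$ for every $j$. First I would record the commutation identity $S_i^* T_j = \sum_{\theta(i,k) = (i_k,j)} T_k S_{i_k}^*$. This is obtained by inserting the resolution of the identity $\sum_{k=1}^n T_k T_k^* = I$ (available precisely because $T$ is a \emph{row-unitary}, not merely a row-isometry), rewriting $T_k^* S_i^* = (S_i T_k)^* = S_{i_k}^* T_{k'}^*$ via the $\theta$-commutation rule, and collapsing $T_{k'}^* T_j = \delta_{k',j} I$ using that $T$ has orthonormal ranges. In effect, $\theta$-commutation together with row-unitarity of $T$ forces the first of the two $\theta$-doubly-commuting relations automatically. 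Applying this identity to a vector $m \in M$, every summand carries a factor $S_{i_k}^* m = 0$, so $S_i^* T_j m = 0$ for all $i$, i.e.\ $T_j m \in M$.

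With $T_j M \subseteq M$ in hand I would finish by a dimension/eigenvalue dichotomy. If $n \geq 2$, the subspaces $T_1 M, \ldots, T_n M$ are pairwise orthogonal (the ranges of $T$ are orthogonal) and each $T_j$ is isometric, so $\bigoplus_{j=1}^n T_j M \subseteq M$ forces $n \dim M \leq \dim M$; as $\dim M < \infty$ this gives $\dim M = 0$. If instead $n = 1$, then $T_1$ is a unitary with $T_1 M \subseteq M$ and $\dim M < \infty$, so $T_1|_M$ is a unitary on a finite-dimensional space and, if $M \neq \{0\}$, possesses an eigenvector; this is an eigenvector of $T_1$ on $H$, contradicting the hypothesis that $T$ has empty point spectrum. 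In either case $M = \{0\}$, hence $H = \{0\}$.

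The main obstacle is precisely the invariance $T_j M \subseteq M$: the wandering space of $S$ need not be preserved by an operator that only $\theta$-commutes with $S$, and it is the row-unitarity of $T$, through the identity $\sum_k T_k T_k^* = I$, that makes the argument work. Once invariance is secured the two cases are short, and it is only at this final stage that the two alternative hypotheses ($n \geq 2$ versus $n = 1$ with empty point spectrum) are used.
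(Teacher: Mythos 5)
Your proposal is correct and follows essentially the same route as the paper's proof: both reduce to the finite-dimensional wandering space $M=\bigcap_i\ker S_i^*$, use the row-unitarity of $T$ (via $\sum_k T_kT_k^*=I$) together with $\theta$-commutation to show $S_i^*T_jm=\sum_{\theta(i,k)=(i_k,j)}T_kS_{i_k}^*m=0$, hence $T_jM\subseteq M$, and then conclude by the orthogonal-ranges dimension count for $n\geq 2$ or the eigenvalue argument for $n=1$. No substantive differences.
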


\begin{proof}
Let $L = \bigcap_{i=1}^m \ker S_i^*$.
By assumption, $L$ is finite-dimensional.
Since $T$ and $S$ $\theta$-commute, it is clear that $L$ is $T^*$-invariant.
As $T$ is a row-unitary, if $h \in L$  and $1\leq i \leq m$ we have that
$$ S_i^* T_j h = \sum_{k=1}^n T_kT_k^* S_i^* T_j h = \sum_{\theta(i,k) = (i_k, j)}T_k S_{i_k}^* h = 0, $$
and so $L$ is $T$-reducing.

If $n\geq 2$, then $T_1|_L,\ldots, T_n|_L$ are isometries with pairwise orthogonal finite-dimensional ranges.
If $n=1$, then $T|_L$ is a unitary on a finite-dimensional space and so has an eigenvalue.
In either case we see that we must have $L = \{0\}$ and hence $H = \{0\}$.
\end{proof}

We end with the following generalisation of \cite[Theorem~5]{Slo1980}.

\begin{theorem}\label{thm: singular parts 2}
Let $S=[S_1,\ldots,S_m]$ and $T=[T_1,\ldots,T_n]$ be $\theta$-commuting be a row-isometries on a Hilbert space $H$.
Assume the unitary part of $S$ is singular, and the shift part of $S$ has finite multiplicity, then $S$ and $T$ have a \sloc-Wold decomposition if
\begin{enumerate}
    \item $n\geq 2$; or
    \item $n = 1$ and $\theta$ is the identity permutation.
\end{enumerate}
\end{theorem}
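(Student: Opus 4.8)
The plan is to check the two conditions in Proposition~\ref{prop: sloc exists}. Let $H=H_u^S\oplus H_s^S$ be the Wold decomposition of $S$. Because the unitary part of $S$ is singular, Proposition~\ref{prop: Hu reduce} gives condition~(1) at once: $H_u^S$ reduces $T$. Consequently $H_s^S$ also reduces $T$, so $T|_{H_s^S}$ is a row-isometry on the space where $S$ is an $m$-shift of finite multiplicity. Writing $H_s^S=H_u^T\oplus H_s^T$ for the Wold decomposition of $T|_{H_s^S}$, Lemma~\ref{lem: Hu inv} shows $H_u^T$ is $S$-invariant, and since $S$ is a pure shift on $H_s^S$ its restriction $S|_{H_u^T}$ is again a pure $m$-shift. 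Everything reduces to condition~(2): that $H_u^T$ reduces $S$.

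The main tool is that $T$ is a row-unitary on $H_u^T$, so $\sum_l T_lT_l^*=I$ there. Inserting this resolution of the identity and applying the commutation rule (so that $S_iT_l=T_{l'}S_{i_l}$ when $\theta(i,l)=(i_l,l')$) yields, on $H_u^T$,
\[
T_j^*S_i \;=\!\!\sum_{l:\,\theta(i,l)=(i_l,j)}\!\! S_{i_l}\,T_l^*,
\]
together with its adjoint. From these relations one checks that the wandering space $W=H_u^T\ominus\bigoplus_i S_iH_u^T$ of $S|_{H_u^T}$ is reducing for $T$ and that $T|_W$ is a row-unitary.

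I would then split into the two cases. When $n=1$ and $\theta$ is the identity, $S$ and $T_1$ commute and $T_1$ is unitary on $H_u^{T_1}$; the displayed relation and its adjoint then express that $S$ and $T_1$ doubly commute on $H_u^{T_1}$, and I would use this, together with the finite multiplicity hypothesis, to identify $H_u^{T_1}$ with the ``$S$-shift, $T$-unitary'' summand and conclude that it reduces $S$. When $n\geq 2$, I would instead aim to show $H_u^T=\{0\}$: if $S|_{H_u^T}$ is a shift of \emph{finite} multiplicity, then on $H_u^T$ we have a row-unitary $T$ with $n\geq 2$ generators $\theta$-commuting with an $m$-shift of finite multiplicity, so Lemma~\ref{lem: unitary and shift 1} forces $H_u^T=\{0\}$ (equivalently $W=\{0\}$, as $T|_W$ is a row-unitary with $n\geq 2$ on a finite-dimensional space). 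An empty $H_u^T$ trivially reduces $S$, giving condition~(2).

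The crux in both cases is the finite-multiplicity claim: I must bound $\dim W$. For a single isometry ($m=1$) this is automatic, since by Beurling--Lax an invariant subspace of a finite-multiplicity shift has finite multiplicity; but for $m\geq 2$ an invariant subspace of a finite-multiplicity $m$-shift can have infinite multiplicity, so the bound cannot come from the module structure of $S$ alone. The extra leverage must be the row-unitarity of $T$ on $H_u^T$. Concretely, I would use the length grading $H_s^S=\bigoplus_{v\in\bF_m^+}S_vL$, where $L=\bigcap_i\ker S_i^*$ is finite-dimensional: the commutation relation shows each $T_j$ raises the $S$-degree weakly, so $T$ preserves this filtration with finite-dimensional graded pieces, and I would play the row-unitary action of $T$ on $W$ against the minimal-degree slice (which lies in one such finite-dimensional piece) to force $\dim W\leq\dim L$. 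Making this degree estimate precise is the heart of the argument; granting it, the rest is the bookkeeping indicated above.
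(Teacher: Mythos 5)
Your overall strategy is the paper's: verify the two conditions of Proposition~\ref{prop: sloc exists}, obtain condition (1) from Proposition~\ref{prop: Hu reduce}, obtain $S$-invariance of $H_u^T$ from Lemma~\ref{lem: Hu inv}, and, for $n\geq 2$, annihilate $H_u^T$ via Lemma~\ref{lem: unitary and shift 1} once $S|_{H_u^T}$ is known to be an $m$-shift of finite multiplicity (for $n=1$ both you and the paper ultimately fall back on \sloc's original argument). The divergence is entirely in how that finite-multiplicity claim is handled, and that is where your proposal has a genuine gap: you correctly isolate the claim as the crux but do not prove it. The degree-filtration argument is only a sketch and, as described, does not close: the wandering space $W=H_u^T\ominus\bigoplus_i S_iH_u^T$ need not lie in a single graded piece $\bigoplus_{|v|=k}S_vL$, the assignment of a vector to its lowest-degree component is not linear, and no mechanism is given by which the row-unitarity of $T$ on $W$ forces $\dim W\leq\dim L$. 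Until that estimate is supplied, the $n\geq2$ case is unfinished, and the $n=1$ case (which you also route through finite multiplicity) inherits the same hole.

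That said, your instinct about where the difficulty sits is sharp. The paper disposes of this step in one line by citing Popescu's Beurling-type theorems to assert that the restriction of a finite-multiplicity $m$-shift to an invariant subspace again has finite multiplicity. For $m\geq2$ the general form of that assertion is false: in $\ell^2(\bF_2^+)$ the closed span of $\{\xi_w : w \text{ contains the letter } 2\}$ is invariant for the multiplicity-one $2$-shift $L$, yet the restriction of $L$ to it is a $2$-shift whose wandering subspace $\overline{\spn}\{\xi_{21^j} : j\geq 0\}$ is infinite-dimensional. So mere $S$-invariance of $H_u^T$ cannot yield the bound, and any complete proof must, as you propose, exploit the additional structure coming from $T$ (or some other special feature of $H_u^T$). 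In short, you have correctly identified a step that needs a genuinely new argument rather than a citation --- but identifying the obstruction is not the same as overcoming it, and the key estimate remains missing from your proposal.
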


\begin{proof}
Let $H = H_u^S \oplus H_s^S$. 
As $S$ has only singular unitary part, $H_u^S$ reduces $T$ by Proposition~\ref{prop: Hu reduce}.
Let $H_s^S = K_u^T \oplus K_s^T$ be the Wold decomposition of the restriction of $T$ to $H_s^S$.
Lemma~\ref{lem: Hu inv} says that $K_u^T$ is $S$-invariant.
As $S$ is an $m$-shift of finite multiplicity on $H_s^S$, the restriction of $S$ to $K_u^T$ is an $m$-shift of finite multiplicity.
When $m=1$ this is \cite[Lemma~4]{Hal1961}; when $m\geq 2$  it follows from \cite[Theorem~3.1]{Pop1989b} and \cite[Theorem~3.2]{Pop1989b}.

When $n \geq 2$ it follows from Lemma~\ref{lem: unitary and shift 1} that $K_u^T = \{0\}$ and hence $S$ and $T$ have a \sloc-Wold decomposition by Proposition~\ref{prop: sloc exists}.
When $n=1$ and $T$ is an isometry commuting with each $S_i$ the proof follows as in \cite[Theorem~4]{Slo1980}.
\end{proof}


\bibliographystyle{plain}
\bibliography{refs}

\end{document}